\documentclass{article}
\usepackage{amsmath, amssymb, amsthm, amsrefs}
\usepackage{verbatim}
\usepackage{hyperref}
\usepackage{ellipsis}
\usepackage{enumerate}
\numberwithin{equation}{section}

\newcommand{\nc}{\newcommand}
\nc{\Qq}{\mathbb{Q}}
\nc{\Cc}{\mathbb{C}}
\nc{\Rr}{\mathbb{R}}
\nc{\Zz}{\mathbb{Z}}
\nc{\Nn}{\mathbb{N}}
\nc{\Pp}{\mathbb{P}}

\nc{\cA}{\mathcal{A}}
\nc{\cP}{\mathcal{P}}
\nc{\cC}{\mathcal{C}}
\nc{\cB}{\mathcal{B}}
\nc{\cE}{\mathcal{E}}
\nc{\cF}{\mathcal{F}}
\nc{\cS}{\mathcal{S}}
\nc{\cH}{\mathcal{H}}
\nc{\cZ}{\mathcal{Z}}
\nc{\cM}{\mathcal{M}}
\nc{\cN}{\mathcal{N}}
\nc{\cI}{\mathcal{I}}
\nc{\cT}{\mathcal{T}}

\nc{\mfk}{\mathfrak} \nc{\mf}{\mfk{m}} \nc{\vp}{ \vec{p}}
\nc{\vD}{\vec{\Delta}}
\nc{\vm}{\vec{m}}
\nc{\vl}{\vec{\ell}}
\nc{\vk}{\vec{k}}
\nc{\va}{\vec{a}}
\nc{\vc}{\vec{c}}

\nc{\norm}[1]{\left\| #1 \right\|} \nc{\gen}[1]{\langle #1 \rangle} 

\nc{\llrrbrac}[1]{%
\left[\mkern-1mu\left[#1\right]\mkern-1mu\right]}

\nc{\llrrpar}[1]{%
\left(\mkern-1mu\left(#1\right)\mkern-1mu\right)}

\nc{\wL}[3]{\det
  \begin{pmatrix} #1 & \cdots & #2 \\
  \partial #1 & \cdots & \partial #2 \\ \vdots & \phantom{\vdots} &
  \vdots \\ \partial^{#3} #1 & \cdots & \partial^{#3} #2
\end{pmatrix}}

\nc{\wron}[3]{\det
  \begin{pmatrix} #1 & \cdots & #2 \\
  #1^\prime & \cdots & #2^\prime \\ 
  \vdots & \phantom{\vdots} & \vdots \\ 
  #1^{(#3)} & \cdots & #2^{(#3)}
\end{pmatrix}}

\nc{\dO}{\partial_{\Omega}}
\nc{\id}{\mathrm{id}}

\nc{\DMO}{\DeclareMathOperator}
\renewcommand{\vec}{\mathbf}

\DMO{\ord}{ord}

\newtheorem*{thm*}{Theorem}
\newtheorem{thm}{Theorem}[section]
\newtheorem{lem}[thm]{Lemma}
\newtheorem{prop}[thm]{Proposition}

\theoremstyle{remark}

\renewcommand{\MR}[1]{}

\begin{document}
\title{Wronskians and Linear Dependence of Formal Power Series}


\author{ Keith Ball\thanks{Partially supported by NSF Grant-DMS-1247679,
Project PUMP: Preparing Undergraduates through Mentoring towards PhDs.}
\and Cynthia Parks\footnotemark[1] \and Wai Yan Pong\footnotemark[1] \\
California State University Dominguez-Hills}

\maketitle 

\begin{abstract}
 We give a new proof of the fact that the vanishing of generalized Wronskians
 implies linear dependence of formal power series in several variables. Our
 results are also valid for quotients of germs of analytic functions.
\end{abstract}

\section{Introduction} \label{sec:intro}

Suppose $f_1, \ldots, f_n$ are $(n-1)$-times differentiable functions. If they
are linearly dependent over the constants then their Wronskian vanishes, i.e.
\begin{align*}
  \wron{f_1}{f_n}{n-1} = 0. \qquad \tag{$*$}
\end{align*} 
The converse of this classical fact, however, is not true, even for
$C^{\infty}$-functions. An example illustrating this was given over a century
ago by B\^ocher~\cite{bocher}: the functions
\begin{align*}
  f_1 = 
  \begin{cases}
    e^{-1/x^2} & x \neq 0; \\
    0 & x = 0
  \end{cases}, \quad 
  \text{and} \quad
  f_2 = 
  \begin{cases}
    e^{-1/x^2} & x > 0; \\
    0 & x = 0\\
    2e^{-1/x^2} & x < 0
  \end{cases}
\end{align*}
are linear independent over $\Rr$, yet their Wronskian vanishes identically.
Over the years, a number of variations of the converse of $(*)$ had been
considered~\cites{wolsson, walker, bd, wdww}. In the several variables case,
with Wronskian replaced by generalized Wronskians, the converse of $(*)$
was proved for analytic functions. Bostan and Dumas gave a short proof
of this result in~\cite{bd}. They proved, more generally, that if $K$
is a field of characteristic zero and formal power series $f_1,\ldots,
f_n \in K\llrrbrac{X_1,\ldots, X_m}$ are linearly independent over $K$,
then at least one of their generalized Wronskians is not identically zero
(\cite{bd}*{Theorem~3}). In this article we propose an equally short, yet
completely different proof of this result. We do so by generalizing it to,
$F_K$, the field of fractions of the formal power series rings over $K$
in countably many indeterminates. More precisely, we prove:
\begin{thm}
  Let $K$ be a characteristic zero field. A finite family $f_1,\ldots, f_n$
  of elements of $F_K$ is linearly dependent over $K$ if and only if all
  generalized Wronskians of the family vanish.  \label{th:main}
\end{thm}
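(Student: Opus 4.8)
The plan is to handle the easy direction directly and the hard direction by a differential-linear-algebra argument in the $F_K$-vector space $F_K^n$, equipped with the componentwise action of the commuting partial derivations $\partial_1, \partial_2, \ldots$, whose common field of constants is exactly $K$. The forward implication is routine: if $\sum_j c_j f_j = 0$ with $c_j \in K$ not all zero, then applying any monomial differential operator $\Delta$ and using that each $c_j$ is a constant gives $\sum_j c_j (\Delta f_j) = 0$; hence in every generalized Wronskian matrix the columns satisfy this same nontrivial $K$-relation, and the determinant vanishes.

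For the converse I would argue the contrapositive: assuming $f_1, \ldots, f_n$ are linearly independent over $K$, I would produce a single nonzero generalized Wronskian. To each monomial operator $\Delta$ associate the vector $v_\Delta = (\Delta f_1, \ldots, \Delta f_n) \in F_K^n$, and let $W_{\le d} \subseteq F_K^n$ be the $F_K$-span of all $v_\Delta$ with $\ord \Delta \le d$. A generalized Wronskian is precisely the determinant of a matrix whose $i$-th row is some $v_{\Delta_i}$ with $\ord \Delta_i \le i$ (and $\Delta_0 = \id$), so it is nonzero exactly when such rows can be chosen $F_K$-linearly independent. Thus it suffices to exhibit $\Delta_0, \ldots, \Delta_{n-1}$ with $\ord \Delta_i \le i$ whose vectors form a basis of $F_K^n$.

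The engine is the following fact, which I would prove by a minimal-support descent: any $F_K$-subspace $U \subseteq F_K^n$ stable under every $\partial_k$ is spanned by its constant vectors $U \cap K^n$; dually, the annihilator of such a $U$ is again $\partial_k$-stable, so if it is nonzero it contains a constant covector $c \in K^n$. Now the filtration grows by $W_{\le d} = W_{\le d-1} + \sum_k \partial_k W_{\le d-1}$, since every $\Delta$ of positive order $\le d$ factors as $\partial_k \Delta'$ with $\ord \Delta' \le d-1$ and $v_{\partial_k\Delta'} = \partial_k v_{\Delta'}$. Hence if the filtration stalls, say $W_{\le d} = W_{\le d-1} =: U$, then $U$ is $\partial_k$-stable for all $k$. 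Were $\dim U < n$, its annihilator would contain a constant covector $c$, and pairing $c$ with $v_{\id} = (f_1, \ldots, f_n) \in U$ would yield $\sum_j c_j f_j = 0$, a nontrivial $K$-relation, contradicting independence. So a stall can only occur at full rank, giving $\dim W_{\le i} \ge \min(i+1, n)$, and in particular $\dim W_{\le i} \ge i+1$ for $i \le n-1$. A greedy choice then succeeds: having chosen independent $v_{\Delta_0}, \ldots, v_{\Delta_{i-1}}$ with $\ord \Delta_\ell \le \ell$, their span has dimension $i < \dim W_{\le i}$, so some generator $v_{\Delta_i}$ with $\ord \Delta_i \le i$ lies outside it.

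The step I expect to be the main obstacle is the bookkeeping of the order filtration, namely showing that a stall of $W_{\le \bullet}$ forces $\partial_k$-stability and hence full rank, and matching this against the exact ``$\ord \Delta_i \le i$ in row $i$'' shape of a generalized Wronskian. This rests on the two facts that the common constants of $F_K$ equal $K$ and that $\partial_k$-stable subspaces are defined over $K$; both are clean but must be set up carefully for the fraction field, extending the derivations by the quotient rule. Once these are in place, the dimension count and the greedy construction finish the proof with no further computation.
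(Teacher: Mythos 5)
Your proposal is correct, but it takes a genuinely different route from the paper's. The paper reduces the many-derivation problem to a single-derivation one: after enlarging $K$ to a field $L$ containing $\Qq(\log(k) \colon k \in \Nn)$ it introduces the log-derivation $\partial$, proves by a double induction (using the commutator identity $[\partial_m,\partial]=\log(m)\partial_m$, the expansion $\partial=\sum_p \log(p)e_p*\partial_p$, and the continuity of the determinant on $F^{n\times n}$) that the vanishing of all generalized Wronskians forces the vanishing of the $\partial$-Wronskian, and then invokes the classical one-derivation criterion (Theorem~\ref{th:wdf}) together with $\ker_{F_L}\partial=L$ before descending to $K$. You instead stay entirely inside the family of basic derivations: the span $W_{\le d}$ of the vectors $(\partial_m f_1,\ldots,\partial_m f_n)$ with $\Omega(m)\le d$ either grows strictly or stabilizes to a $\partial_p$-stable subspace, and a proper stable subspace containing $(f_1,\ldots,f_n)$ would yield, via the minimal-support lemma applied to its (also stable) annihilator, a constant linear relation among the $f_i$; the resulting bound $\dim W_{\le i}\ge\min(i+1,n)$ and your greedy choice of a row of order at most $i-1$ in the $i$-th row match the paper's admissibility condition exactly. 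Your approach buys a proof with no transcendental detour, no base-field extension, no topology on $F$, and an explicit construction of a nonvanishing generalized Wronskian; its cost is that you must supply two facts the paper never needs in this form: that $\bigcap_p\ker_{F_K}\partial_p=K$ on the \emph{fraction field} (trivial on $A_K$, but on $F_K$ it requires an order-comparison argument in the spirit of Proposition~\ref{p:constants}; the paper's example $e_p/e_q\in\ker_F\partial_{\Omega}\setminus K$ shows such kernel computations on $F$ are not automatic), and the stable-subspace lemma over the field of constants. Both are standard, and you correctly flag them as the points needing care; with them in place the argument is complete.
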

Our proof of Theorem~\ref{th:main} takes advantage of the fact that the
converse of $(*)$ holds for differential fields~\cite{afg}*{Theorem~6.3.4}:
\begin{thm}
  Let $D$ be a derivation of a field $F$. A finite family of $f_1,\ldots
  f_n$ of elements of $F$ is linearly dependent over the kernel of $D$ if
  and only if the Wronskian of $f_1,\ldots, f_n$ with respect to $D$ is 0.
  \label{th:wdf}
\end{thm}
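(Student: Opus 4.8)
The plan is to prove the substantive implication (vanishing Wronskian $\Rightarrow$ dependence over $\ker D$) by induction on the family size $n$, the reverse implication being immediate: if $\sum_i c_i f_i = 0$ with the $c_i \in \ker D$ not all zero, then applying $D$ repeatedly and using $Dc_i = 0$ yields $\sum_i c_i D^k f_i = 0$ for all $k$, so the columns of the Wronskian matrix are $F$-dependent and the Wronskian $W(f_1,\ldots,f_n)=0$. (I recall here that $\ker D$ is a subfield, since $D1=0$ and $D(a^{-1})=-a^{-2}Da$.) For the base case $n=1$, $W(f_1)=f_1=0$ forces the trivial dependence. Assuming the result for size $n-1$, suppose $W(f_1,\ldots,f_n)=0$. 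If the truncated Wronskian $W(f_1,\ldots,f_{n-1})$ also vanishes, induction makes $f_1,\ldots,f_{n-1}$, hence the whole family, dependent over $\ker D$; so I may assume $W(f_1,\ldots,f_{n-1})\neq 0$ and aim to write $f_n$ as a $\ker D$-combination of the others.

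To set this up I would read the Wronskian through the column vectors $\mathbf{v}_j=(f_j,Df_j,\ldots,D^{n-1}f_j)^{T}\in F^n$. Nonvanishing of $W(f_1,\ldots,f_{n-1})$ says the upper $(n-1)\times(n-1)$ minor of $\mathbf{v}_1,\ldots,\mathbf{v}_{n-1}$ is invertible, so these vectors are $F$-independent, while $W(f_1,\ldots,f_n)=0$ makes all $n$ columns $F$-dependent. Hence there exist $a_1,\ldots,a_{n-1}\in F$ with
\[
  D^k f_n=\sum_{j=1}^{n-1}a_j\,D^k f_j,\qquad k=0,1,\ldots,n-1,
\]
and the entire problem reduces to showing each $a_j\in\ker D$.

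This final step is the crux, and the main obstacle. The idea is to differentiate the $k$-th relation above for $0\le k\le n-2$, expand by the Leibniz rule, and subtract the $(k+1)$-st relation (legitimate since $k+1\le n-1$); the terms $\sum_j a_j D^{k+1}f_j$ cancel and one is left with the homogeneous system
\[
  \sum_{j=1}^{n-1}(Da_j)\,D^k f_j=0,\qquad k=0,1,\ldots,n-2.
\]
Its coefficient matrix is exactly the $(n-1)\times(n-1)$ Wronskian matrix of $f_1,\ldots,f_{n-1}$, which is invertible by assumption, forcing $Da_j=0$ for all $j$. Thus the $a_j$ are constants and $f_n=\sum_j a_j f_j$ is the desired dependence relation. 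The delicate point is precisely this maneuver: it is what upgrades the merely $F$-linear dependence supplied by $\det=0$ into one with coefficients in $\ker D$, and it works only because the truncated Wronskian was arranged to be nonzero before differentiating — the whole case split in the induction exists to guarantee that.
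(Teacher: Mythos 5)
Your proof is correct, but there is nothing in the paper to compare it against: the paper does not prove Theorem~\ref{th:wdf} at all. It imports the statement from the literature (citing Theorem~6.3.4 of the reference \emph{afg}) and uses it as a black box in the proof of Theorem~\ref{th:main}. What you have written is the classical induction argument (essentially Kolchin's proof, which is also what the cited source gives), and it is complete: the easy direction by applying $D^k$ to the dependence relation; the induction with the case split on the truncated Wronskian $W(f_1,\ldots,f_{n-1})$; the extraction of $\mathbf{v}_n=\sum_j a_j\mathbf{v}_j$ --- here you use implicitly, and could state explicitly, that in any $F$-dependence among the $n$ columns the coefficient of $\mathbf{v}_n$ must be nonzero, precisely because $\mathbf{v}_1,\ldots,\mathbf{v}_{n-1}$ are $F$-independent; and the crux, differentiating the relations $D^kf_n=\sum_j a_jD^kf_j$ for $0\le k\le n-2$ and subtracting the $(k+1)$-st to get the homogeneous system $\sum_j(Da_j)D^kf_j=0$ with invertible coefficient matrix, forcing each $a_j\in\ker D$. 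Two small merits of your write-up are worth noting: you justify that $\ker D$ is a subfield (needed so that ``linear dependence over $\ker D$'' behaves as expected and so the inductive case split goes through), and your argument uses no hypothesis on the characteristic of $F$, matching the full generality of the statement --- the characteristic-zero assumption elsewhere in the paper is needed only for the log-derivation machinery, not for this theorem. In short: correct, self-contained, and it supplies a proof the paper deliberately outsources.
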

We will show in Proposition~\ref{p:all=>log} that the vanishing of all
generalized Wronskians of a family in $F_K$ implies the vanishing of its
Wronskian with respect to the log-derivation which can be defined if $K$
contains a copy of $\Qq(\log(k) \colon k \in \Nn)$. So for those fields $K$
Theorem~\ref{th:main} follows from Theorem~\ref{th:wdf} since the kernel of the
log-derivation is $K$ (Proposition~\ref{p:constants}). The general case can be
argued over an extension of $K$ that contains $\Qq(\log(k) \colon k \in \Nn)$
then use the fact that linear independence is preserved under extension of
scalars. We find borrowing a transcendental function, namely the logarithm,
to prove a purely algebraic result interesting. In the next section the
reader will see that considering the problem in countably many variables
allows us to examine it through the lens of arithmetic functions/Dirichlet
series. It is only from that point of view that the relevance of logarithm
becomes transparent.

\section{Ring of Arithmetic Functions} \label{sec:ps-af}

Let $K$ be a field. By a {\bf $K$-valued arithmetic function} we mean a
function from $\Nn$ to $K$. With $\alpha \in K$ identified with the function
$1 \mapsto \alpha$ and $n \mapsto 0$ for all $n >1$ the $K$-valued arithmetic
functions form a $K$-algebra, denoted by $A_K$, under the operations:
\begin{enumerate}
  \item $(f+g)(n) := f(n) + g(n)$; and
  \item $\displaystyle (f*g)(n) := \sum_{mk=n}f(m)g(k)$.
\end{enumerate}
The following are a few specific $\Qq$-valued arithmetic functions that often
appear in this article: let $e_n$ be the function whose value is $1$ at $n$
and $0$ elsewhere. For each prime $p$, let $v_p$ be the function that assigns
to each $n$ the largest integer $k \ge 0$ so that $p^k$ divides $n$. Let
$\Omega$ be the function $\sum_{p \in \Pp} v_p$ where $p$ runs through the
set of primes $\Pp$, thus $\Omega$ counts the total number of prime factors
(with multiplicity) of its argument.

At first sight the ring $A_K$ does not look like a ring of power series, but it
is actually isomorphic, as $K$-algebra, to $K\llrrbrac{X_p \colon p \in \Pp}$
via
\begin{equation}
  f \longmapsto \sum_{n \in \Nn} f(n)\prod_{p \in \Pp} X_p^{v_p(n)}.
  \label{eq:iso}
\end{equation}
The algebras $K\llrrbrac{X_p \colon p \in \Pp}$ and $K\llrrbrac{X_i \colon i
\in \Nn}$ are certainly isomorphic too, but our results can be stated much
more elegantly if $A_K$ is identified with the former algebra. Note also
that $A_K$ is isomorphic to the algebra of formal Dirichlet series with
coefficients from $K$ via
\begin{equation}
  f \longmapsto \sum_{n \ge 1} \frac{f(n)}{n^s}.
  \label{eq:iso_dirichlet}
\end{equation}
We will drop the base field $K$ from the notation if no confusion arises.

For a nonzero $f \in A$, let $\ord(f)$ be the least $n \in \Nn$ such that
$f(n) \neq 0$. We set $\ord(0) = \infty$. Let $\norm{f}$ to be $1/\ord(f)$
(with $1/\infty =0$). Clearly $\norm{f} \ge 0$ for any $f \in A$ and the
equality holds precisely when $f =0$. It is also clear that $\norm{f+g} \le
\max\{\norm{f}, \norm{g}\}$. Moreover, one checks that for all $f,g \in A$,
$(f*g)(n) = 0$ for all $n < \ord(f)\ord(g)$, and that
\begin{align*}
  (f*g)(\ord(f)\ord(g)) = f(\ord(f))g(\ord(g)).
\end{align*}
Therefore, $\ord(f*g) =\ord(f)\ord(g)$ or equivalently $\norm{f*g} =
\norm{f}\norm{g}$. Thus $\norm{\cdot}$ is an ultrametric absolute value
on $A$ (c.f.~\cite{vf}*{Section~1.1}). Consequently, $A$ is an integral
domain. Let $F$ denote its field of fractions. Note that $\norm{\cdot}$
extends uniquely to an ultrametric absolute value of $F$ by $\norm{f/g}:=
\norm{f}/\norm{g}$. We further extend $\norm{\cdot}$ to a norm on $F^m$ ($m
\in \Nn$) by setting $\norm{\vec{x}}:= \max\{\norm{x_i} \colon 1 \le i \le
m\}$. A sequence $(\vec{x}_n)$ in $F^m$ {\bf converges} to an element $\vec{x}
\in F^m$, if the sequence $(\norm{\vec{x}_n - \vec{x}})_{n \in \Nn}$ converges
to $0$. Addition and multiplication of $F$ are continuous (i.e. preserving
convergent sequences) and so are the coordinate projections. As a result,
polynomial functions are continuous. In particular for each $n$, the
determinant function from $F^{n\times n}$ to $F$ is continuous.

A {\bf derivation} of a ring $R$ is a map $D$ from $R$ to itself satisfying
$D(a+b) = D(a) + D(b)$ and $D(ab) = aD(b) + bD(a)$ for all $a,b \in R$. We
will not distinguish by notation a derivation of an integral domain from its
unique extension (by the ``quotient rule'') to the field of fractions. There
are several ``natural'' derivations of $A$ (and hence $F$) coming from the
isomorphisms~\eqref{eq:iso} and~\eqref{eq:iso_dirichlet}: the derivation
$\partial/\partial X_p$ $(p \in \Pp)$ of $K\llrrbrac{X_p \colon p \in \Pp}$
corresponds to the derivation $\partial_p$ of $A$ given by
\begin{align*}
  (\partial_p f) (n) = v_p(np)f(np), \quad (f \in A, n \in \Nn).
\end{align*}
We call each $\partial_p$ ($p \in \Pp$) a {\bf basic derivation} of $A$. If
$K$ contains a copy of the field $\Qq(\log(k) \colon k \in \Nn)$, then the
derivation $-d/ds$ of the ring of formal Dirichlet series corresponds to
the derivation $\partial$ of $A$ given by
\begin{equation*}
  \partial f(n) = \log(n)f(n),\quad (f \in A, n \in \Nn) \label{eq:log-der}.
\end{equation*}
We call $\partial$ the {\bf log-derivation} of $A$. It is easy to check that
$\norm{f} = \norm{\partial f}$ if and only if $\norm{f} < 1$, or equivalently
$f$ is not a unit of $A$. One checks also that the kernel of $\partial$ in
$A$, i.e. the set $\ker_A \partial:=\{f \in A \colon \partial f = 0\}$, is
$K$. Next we show that extending $\partial$ to $F$ does not enlarge its kernel.
\begin{prop}
  $\ker_{F_K} \partial = K$.  \label{p:constants}
\end{prop}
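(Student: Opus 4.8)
The plan is to establish the nontrivial inclusion $\ker_{F_K}\partial \subseteq K$; the reverse inclusion is immediate, since every $\alpha \in K$ (the function $1 \mapsto \alpha$) satisfies $\partial\alpha = 0$, exactly as was already observed for $A$. So I fix a nonzero $h \in \ker_{F_K}\partial$ and write $h = f/g$ with $f, g \in A \setminus \{0\}$. Extending $\partial$ to $F_K$ by the quotient rule, the equation $\partial h = 0$ amounts to vanishing of the numerator, i.e.\ to the convolution identity $(\partial f) * g = f * (\partial g)$ in $A$ (here $g*g \neq 0$ because $A$ is a domain). Writing this out coefficientwise and using $\partial f(n) = \log(n) f(n)$ turns it into $\sum_{dk = n} \log(d/k) f(d) g(k) = 0$ for every $n$, though in fact I will only need its lowest-order consequence.

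The crux is a rigidity lemma: if $f, g \in A$ are nonzero and $(\partial f)*g = f*(\partial g)$, then $\ord(f) = \ord(g)$. To prove it, set $a = \ord(f)$ and $b = \ord(g)$ and evaluate both sides of the identity at $n = ab$. On each side the only factorization $ab = dk$ with $f(d) \neq 0$ and $g(k) \neq 0$ is $d = a$, $k = b$, because $d \geq a$ and $k \geq b$ force $dk \geq ab$ with equality only there. Hence the identity collapses to $\log(a) f(a) g(b) = \log(b) f(a) g(b)$, and since $f(a), g(b) \neq 0$ this gives $\log(a) = \log(b)$, so $a = b$ by injectivity of $\log$ on $\Nn$. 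I expect this leading-coefficient comparison to be the main (albeit short) obstacle: it is precisely the point where the special form of the log-derivation enters, through the fact that $\log$ separates distinct orders in $K$.

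With the lemma in hand I conclude by a single normalization. Put $a = \ord(f) = \ord(g)$, let $\alpha_0 = f(a)/g(a) \in K$, and set $f_1 = f - \alpha_0 g \in A$. Then $f_1/g = h - \alpha_0$, and since $\alpha_0 \in \ker\partial$ we still have $\partial(f_1/g) = \partial h - \partial\alpha_0 = 0$. By construction $f_1(a) = f(a) - \alpha_0 g(a) = 0$, so either $f_1 = 0$ or $\ord(f_1) > a$. If $f_1 \neq 0$, then applying the lemma to the pair $(f_1, g)$ would force $\ord(f_1) = \ord(g) = a$, contradicting $\ord(f_1) > a$. Therefore $f_1 = 0$, that is $h = \alpha_0 \in K$, which completes the argument.
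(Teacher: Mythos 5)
Your proof is correct and follows essentially the same route as the paper's: cross-multiply to get $(\partial f)*g = f*(\partial g)$, evaluate at $\ord(f)\ord(g)$ to force $\ord(f)=\ord(g)$ via injectivity of $\log$, then subtract $(f(a)/g(a))\,g$ and rerun the argument on the difference. The only (cosmetic) divergence is that the paper first makes a short detour through the ultrametric norm to dispose of the case where $g$ is a unit, whereas your leading-coefficient comparison at $n=ab$ handles all cases uniformly.
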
 
\begin{proof}
  The inclusion $K \subseteq \ker_F \partial$ is clear. To establish the
  reverse inclusion, pick any $f/g \in \ker_{F} \partial \setminus\{0\}$ then
  \begin{align}
    \label{eq:quo} \partial f * g = f * \partial g 
  \end{align} 
  and so $\norm{\partial f}\norm{g} = \norm{f}\norm{\partial g}$. If $g$
  is invertible in $A$, then $f/g$ is already in $\ker_A \partial =K$. So
  suppose $g$ is not a unit, it then follows that $\norm{g} = \norm{\partial
  g}$ and hence $\norm{f} = \norm{\partial f}$ (because $\norm{g} \neq
  0$). Evaluating both sides of~\eqref{eq:quo} at $\ord(f)\ord(g)$ yields
  $\log(\ord(f)) = \log(\ord(g))$ and hence $\ord(f) = \ord(g)$. Consider
  the function $h:=f-(f(n)/g(n))g$ where $n=\ord(f)=\ord(g)$. Then $\ord(h)
  > \ord(g)$ but $h/g = f/g - f(n)/g(n) \in \ker_F \partial$. So $h$ must
  be 0, i.e. $f/g = f(n)/g(n) \in K$, otherwise the same argument with $f$
  replaced by $h$ will show that $\ord(h)=\ord(g)$, a contradiction. This
  completes the proof of the other inclusion.
\end{proof}
It is probably worth pointing that the validity of the proposition hinges
on the fact that $\log$ is 1-to-1. For instance, one checks readily that
the kernel of the derivation on $A$ given by $\partial_{\Omega}f(n) =
\Omega(n)f(n)$ is $K$ but for distinct primes $p,q$, $e_p/e_q \in \ker_F
\partial_{\Omega} \setminus K$.

Every continuous derivation of $A$ can be expressed as a series of basic
derivations~\cite{conv}*{Theorem~4.3}. The series for the log-derivation is
\begin{equation}
 \sum_{p \in \Pp} \log(p)e_p*\partial_p.
\label{eq:der-relation}
\end{equation}
That means for each $f \in A$, the sequence $(s_N(f))_{N \in \Nn}$
converges to $\partial f$ where $s_N$ is the partial sums $\sum_{p \le N}
\log(p)e_p*\partial_p$. Next we show that the same relation holds for their
extensions to $F$.
\begin{lem}
  As derivations of $F$, $\partial = \sum_{p} \log(p)e_p*
  \partial_p$. \label{l:log-basic}
\end{lem}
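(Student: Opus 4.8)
The plan is to reduce the assertion about derivations of $F$ to the convergence $s_N(f) \to \partial f$ on $A$ recorded just before the lemma, by pushing the limit through the quotient rule. First I would note that each partial sum $s_N = \sum_{p \le N} \log(p) e_p * \partial_p$ is a finite $A$-linear combination of the basic derivations, hence itself a derivation of $A$, and that $\partial$ is likewise a derivation of $A$; all of these extend uniquely to derivations of $F$. Under this reading, the identity $\partial = \sum_p \log(p) e_p * \partial_p$ on $F$ means precisely that $s_N(x) \to \partial x$ for every $x \in F$, so it suffices to verify this for an arbitrary $x = f/g$ with $f,g \in A$ and $g \neq 0$.

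For such an $x$, the quotient rule applied to both derivations gives
\begin{equation*}
  \partial\!\left(\frac{f}{g}\right) = \frac{\partial f * g - f * \partial g}{g * g}, \qquad s_N\!\left(\frac{f}{g}\right) = \frac{s_N f * g - f * s_N g}{g * g},
\end{equation*}
and subtracting yields
\begin{equation*}
  \partial\!\left(\frac{f}{g}\right) - s_N\!\left(\frac{f}{g}\right) = \frac{(\partial f - s_N f) * g - f * (\partial g - s_N g)}{g * g}.
\end{equation*}
Taking norms and using that $\norm{\cdot}$ is a multiplicative ultrametric absolute value on $F$, I would bound the right-hand side by
\begin{equation*}
  \frac{\max\{\,\norm{\partial f - s_N f}\,\norm{g},\ \norm{f}\,\norm{\partial g - s_N g}\,\}}{\norm{g}^2}.
\end{equation*}

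Since $\norm{f}$ and $\norm{g} \neq 0$ are fixed while $\norm{\partial f - s_N f} \to 0$ and $\norm{\partial g - s_N g} \to 0$ by the convergence on $A$, the displayed bound tends to $0$ as $N \to \infty$, so $s_N(f/g) \to \partial(f/g)$, as required. The only delicate point is that the convergence genuinely survives division; but this is exactly where multiplicativity of $\norm{\cdot}$ and the fact that the denominator $g * g$ is held fixed do the work, so I expect no real obstacle beyond this bookkeeping.
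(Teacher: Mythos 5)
Your proof is correct and is essentially the paper's own argument: both reduce the claim to showing $\norm{s_N(f/g) - \partial(f/g)} \to 0$, apply the quotient rule to express the difference in terms of $s_N f - \partial f$ and $s_N g - \partial g$, and use the multiplicative ultrametric norm to obtain the same bound $\max\{\norm{s_N f - \partial f}/\norm{g},\ \norm{f}\norm{s_N g - \partial g}/\norm{g}^2\}$, which vanishes by the convergence on $A$. The only cosmetic difference is that you write the difference as a single fraction over $g*g$ before taking norms, whereas the paper keeps it as a difference of two quotients.
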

\begin{proof}
  We need to show that for any $f$ and $g\neq 0$ in $A$, $\norm{s_N(f/g) -
  \partial(f/g)} \to 0$ as $N \to \infty$. Since $s_N$ is a derivation,
  \begin{align*}
    &\norm{s_N\left(\frac{f}{g}\right) - \partial\left( \frac{f}{g} \right)}\\
    =&\norm{\frac{s_N(f) - \partial f}{g} - \left( \frac{f}{g}
    \right)\frac{s_N(g) - \partial g}{g}} \\
    \le& \max\left\{ \frac{\norm{s_N(f) - \partial f}}{\norm{g}},
    \frac{\norm{f}\norm{s_N g - \partial g}}{\norm{g}^2} \right\}.
  \end{align*}
  The maximum tends to 0 as $N$ tends to $\infty$ because $s_N$ converges
  to $\partial$ on $A$.
\end{proof}

Basic derivations commute with each other. In general, two derivations need
not commute but their commutator is always a derivation. The next lemma can
be viewed as a generalization of this fact to differential operators of the
form $\partial_m:=\prod_{p \in \Pp} \partial_p^{v_p(m)}$ ($m \in \Nn$). Note
that $\partial_1$ is simply the identity map of $F$.
\begin{lem}
   $[\partial_m, \partial] = \log(m)\partial_m$ on $F$.  \label{l:commutator}
\end{lem}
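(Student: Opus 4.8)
The plan is to reduce the identity to the single-prime case and then bootstrap to an arbitrary $m$ by means of the associative-algebra commutator (Leibniz) rule $[AB,C] = A[B,C] + [A,C]B$, working throughout at the level of $K$-linear operators on $F$. The point I would keep in mind is that for $\Omega(m) \ge 2$ the operator $\partial_m$ is a genuine higher-order differential operator rather than a derivation, so the argument must not lean on derivation properties of $\partial_m$ itself; only $\partial$ and the individual basic derivations $\partial_p$ are derivations.

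First I would settle the base case $[\partial_p, \partial] = \log(p)\partial_p$. On $A$ this is a one-line computation from the defining formulas $\partial_p f(n) = v_p(np)f(np)$ and $\partial f(n) = \log(n)f(n)$: for $f \in A$ and $n \in \Nn$,
\[
  [\partial_p, \partial]f(n) = v_p(np)\bigl(\log(np) - \log(n)\bigr)f(np) = \log(p)\,\partial_p f(n),
\]
where the middle equality uses $\log(np) = \log(n) + \log(p)$. To promote this from $A$ to $F$, I would observe that both sides are derivations of $F$: the left side is the commutator of the two derivations $\partial_p$ and $\partial$ (and the commutator of derivations is a derivation), while the right side is the constant $\log(p) \in K$ times the derivation $\partial_p$. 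Since $A$ generates $F$ as a field and a derivation of $F$ is determined by its restriction to $A$ through the quotient rule, two derivations that agree on $A$ agree on $F$. Hence $[\partial_p, \partial] = \log(p)\partial_p$ holds on all of $F$.

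From here the rest is formal. Because $\log(p) \in K$ is central and $K$-linear operators commute with scalars, an induction on $k$ using the Leibniz rule would give
\[
  [\partial_p^{\,k}, \partial] = k\log(p)\,\partial_p^{\,k}
\]
on $F$. Writing $m = \prod_{p} p^{v_p(m)}$ and expanding $[\partial_m, \partial]$ by the Leibniz rule across the commuting factors $\partial_p^{v_p(m)}$, each factor contributes a term $v_p(m)\log(p)\,\partial_m$ (the scalar pulls out and the remaining commuting factors recombine to $\partial_m$). Summing and invoking the additivity of the logarithm, $\sum_p v_p(m)\log(p) = \log\bigl(\prod_p p^{v_p(m)}\bigr) = \log(m)$, would then yield $[\partial_m, \partial] = \log(m)\partial_m$.

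I expect the only genuine subtlety to be the passage from $A$ to $F$. Resolving it cleanly depends on performing the extension exactly once, at the base case where both sides are honest derivations, and then carrying out every subsequent manipulation purely formally at the operator level, so that it is automatically valid over $F$. A more computational alternative would establish a closed form $\partial_m f(n) = c_m(n) f(nm)$ on $A$ and read the identity off at once from $\log(nm) - \log(n) = \log(m)$; but since $\partial_m$ is not a derivation, that route does not extend to $F$ as transparently, which is why I would favor the commutator approach.
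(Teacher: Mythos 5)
Your proposal is correct and follows essentially the same route as the paper: both arguments verify $[\partial_p,\partial]=\log(p)\partial_p$ on $A$ by direct computation, extend it to $F$ by noting that two derivations agreeing on $A$ agree on $F$, and then peel off one prime factor at a time via the commutator Leibniz rule (which the paper packages as an induction on $\Omega(m)$). Your write-up just fills in the base-case computation that the paper leaves as ``one checks directly.''
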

\begin{proof}
  We prove this by induction on $\Omega(m)$. The case $\Omega(m) = 0$,
  i.e. $m=1$ is clear. Assume for some $k \ge 0$, the lemma is true for all
  $n$ with $\Omega(n) = k$. Pick any $m \in \Nn$ with $\Omega(m) = k+1$ then
  $m =np$ for some prime $p$ and $n$ with $\Omega(n) = k$. So $[\partial_n,
  \partial] = \log(n)\partial_n$ by the induction hypothesis. One checks
  directly that the derivations $[\partial_p,\partial]$ and $\log(p)\partial_p$
  agree on $A$ and hence on $F$. Thus, the following computation finishes
  the proof.
  \begin{align*}
    [\partial_m,\partial] &= \partial_m\partial - \partial\partial_m =
    \partial_p(\partial_n\partial) - (\partial\partial_p)\partial_n \\
    &= \partial_p(\partial\partial_n + \log(n)\partial_n) - (\partial_p\partial
    - \log(p)\partial_p)\partial_n \\
    &= \partial_p\partial\partial_n + \log(n) \partial_p\partial_n -
    \partial_p\partial\partial_n + \log(p)\partial_p\partial_n =
    \log(m)\partial_m.
  \end{align*}
\end{proof}

\section{Wronskians and Linear Dependence} \label{sec:Wronskian} 

A {\bf generalized Wronskian} of a family $\vec{f} = (f_1,\ldots, f_n)$
of elements of $F$ is the determinant of a matrix of the form
\begin{equation}
  \begin{pmatrix}
    \partial_{m_1} f_1 &\cdots &\partial_{m_1} f_n \\ 
    \partial_{m_2} f_1 &\cdots &\partial_{m_2} f_n \\ 
    \vdots &\vdots &\vdots \\ 
    \partial_{m_n} f_1 &\cdots &\partial_{m_n} f_n
  \end{pmatrix}
  \label{eq:genW}
\end{equation}
where $\Omega(\vm) := (\Omega(m_1),\ldots, \Omega(m_n))$ is admissible. Here
a tuple of non-negative integers is {\bf admissible} if its $i$-th entry is
at most $i-1$ ($1 \le i \le n$). Consequently, $\Omega(m_1)$ must be 0 and so
$m_1$ must be $1$ if $\Omega(\vm)$ is admissible. The concept of generalized
Wronskian was introduced by Ostrowski~\cite{gw} and was used in the proof
of the famous Roth Lemma (see, for example,~\cite{DioGeo}*{Lemma~D.6.1,
Propsition~D.6.2}).

For notation simplicity, we will drop the fix but otherwise arbitrary
family ${\mathbf f}$ from the notation. Also, we will regard a matrix as
the tuple of its rows. Hence $(\partial_{\vm}) = (\partial_{m_1},\ldots,
\partial_{m_n})$ stands for the matrix in~\eqref{eq:genW}. Similarly, for a
tuple $\vk = (k_1,\ldots, k_n)$ of non-negative integers, $(\partial^{\vk})
= (\partial^{k_1},\ldots, \partial^{k_n})$ stands for the matrix
$(\partial^{k_i}f_j)$. The determinant of the matrix $(\partial^{i-1}f_j)$
($1 \le i,j \le n$) is the {\bf $\partial$-Wronskian} of $\vec{f}$, i.e. the
Wronskian of $\vec{f}$ with respect to the log-derivation.

The next proposition is a key step in our proof of Theorem~\ref{th:main}. The
idea is to ``replace'' each basic derivation in a generalized Wronskian
by the log-derivation successively while keeping the intermediate
determinants zero. To avoid this rather simple idea being obscured by the
induction argument, we encourage the reader to work out the case $n=3$
for himself/herself. In the course of writing out the proof, we find the
list-slicing syntax from the programming language Python useful\footnote{We
differ from Python slightly. For instance, in Python indices start at 0 and so
$\vk[:i]$ means $(k_0, \ldots, k_{i-1})$ instead.}: for $\vk = (k_1,\ldots,
k_n)$, let $\vk[: i]$ denote the tuple $(k_1,\ldots, k_i)$ and $\vk[i:]$
denote $(k_i,\ldots, k_n)$. We understood $\vk[:0]$ and $\vk[n+1:]$ to be
the null sequence.

\begin{prop}
  Let $K$ be a field containing a copy of $\Qq(\log(k) \colon k \in \Nn)$
  as subfield. If all generalized Wronskians of a family $f_1,\ldots, f_n
  \in F_K$ vanish then so does its $\partial$-Wronskian.  \label{p:all=>log}
\end{prop}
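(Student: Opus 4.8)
The plan is to realise the $\partial$-Wronskian as a limit of determinants built only from the basic derivations, and to see that each of those determinants is a linear combination of generalized Wronskians. Let $s_N = \sum_{p \le N} \log(p)\, e_p * \partial_p$ be the partial sums of Lemma~\ref{l:log-basic}, so that $s_N \to \partial$ pointwise on $F$. The first step is to upgrade this to convergence of all powers: $s_N^k f \to \partial^k f$ for every $f \in F$ and every $k \ge 0$. For this I would first observe that each $s_N$ is non-expanding for $\norm{\cdot}$. Indeed $\norm{\partial_p g} \le p\norm{g}$ (because $\ord(\partial_p g) \ge \ord(g)/p$) while $\norm{\log(p)e_p} = 1/p$, so each summand of $s_N$ has operator norm at most $1$; the ultrametric inequality then gives $\norm{s_N f} \le \norm{f}$ on $A$, and the quotient rule propagates the same bound to $F$. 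With $(s_N)$ equicontinuous and $s_N \to \partial$, the identity $s_N^k x - \partial^k x = s_N\bigl(s_N^{k-1}x - \partial^{k-1}x\bigr) + (s_N - \partial)\bigl(\partial^{k-1}x\bigr)$ yields $s_N^k \to \partial^k$ by induction on $k$.

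Next I would expand $s_N^k$ over the basic derivations. By induction on $k$, using that basic derivations commute (so $\partial_p \partial_m = \partial_{pm}$) together with the Leibniz rule $\partial_p(c * \partial_m f) = (\partial_p c)*\partial_m f + c*\partial_{pm}f$, one obtains a finite representation $s_N^k = \sum_{\Omega(m) \le k} c_m * \partial_m$ with coefficients $c_m \in A$: each application of $s_N$ raises $\Omega(m)$ by at most one and keeps the coefficients inside $A$. In particular $s_N^{\,i-1}$ is an $A$-combination of operators $\partial_m$ with $\Omega(m) \le i-1$, and $s_N^0 = \partial_1$.

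Now I would form the matrix whose $i$-th row is $(s_N^{\,i-1}f_j)_j$. Since every entry converges to $\partial^{\,i-1}f_j$, continuity of the determinant on $F^{n\times n}$ gives $\det(s_N^{\,i-1}f_j) \to W_\partial$, the $\partial$-Wronskian. On the other hand, expanding each row by the representation above and using multilinearity of the determinant in its rows, $\det(s_N^{\,i-1}f_j)$ becomes a finite $F$-linear combination of determinants $\det(\partial_{m_i}f_j)$ in which $\Omega(m_i) \le i-1$ for every $i$; that is, $\Omega(\vm)$ is admissible and each such determinant is a generalized Wronskian. By hypothesis every one of them vanishes, so $\det(s_N^{\,i-1}f_j)=0$ for all $N$, and passing to the limit forces $W_\partial = 0$.

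The main obstacle is the very first step --- upgrading $s_N \to \partial$ to $s_N^k \to \partial^k$ --- since the expansion of $s_N^k$ and the multilinear bookkeeping are routine once the limit is available, whereas convergence of the iterated operators genuinely needs the uniform (equi-)continuity of the $s_N$ rather than mere pointwise convergence. A secondary point to handle with care is verifying that the $\Omega$-degree of the operator in row $i$ never exceeds $i-1$ throughout the expansion, as this is precisely what guarantees admissibility and hence that every term is a bona fide generalized Wronskian. Note that this route uses only Lemma~\ref{l:log-basic} and the commutativity of the basic derivations, bypassing the commutator identity of Lemma~\ref{l:commutator}.
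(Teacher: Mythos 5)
Your argument is correct, and it is a genuinely different route from the one in the paper. The paper never forms the operators $s_N^k$; instead it runs a double induction over statements $S_{i,j}(\vec{k})$, converting the basic derivations in a generalized Wronskian into copies of $\partial$ one row at a time, and the price of that row-by-row replacement is the commutator identity $[\partial_m,\partial]=\log(m)\partial_m$ (Lemma~\ref{l:commutator}), which is needed to move a newly created $\partial$ past the remaining $\partial_b\partial^{j}$ in the same row. You replace that entire bookkeeping by a single limiting argument: the contraction estimate $\norm{e_p*\partial_p f}\le\norm{f}$ (which the paper does not state but which follows exactly as you say from $\ord(\partial_p f)\ge\ord(f)/p$ and $\ord(e_p*g)=p\,\ord(g)$, and passes to $F$ by the quotient rule) gives $\norm{s_N}\le 1$ uniformly in $N$, hence $s_N^k\to\partial^k$ pointwise, and then multilinearity of the determinant expresses $\det(s_N^{\,i-1}f_j)$ as a finite $A$-combination of generalized Wronskians, all zero by hypothesis, with admissibility coming from the fact that each application of $s_N$ raises $\Omega$ by at most one. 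Both proofs share Lemma~\ref{l:log-basic} and the continuity of the determinant; what yours buys is the elimination of Lemma~\ref{l:commutator} and of the nested induction, at the cost of the one genuinely new ingredient you correctly identify, the uniform non-expansiveness of the $s_N$ (pointwise convergence $s_N\to\partial$ alone would not give $s_N^k\to\partial^k$). The one point worth writing out explicitly if you formalize this is that the extension of $\sum_{p\le N}\log(p)e_p*\partial_p$ to $F$ agrees with the corresponding combination of the extended $\partial_p$, and that the extended basic derivations still commute (both follow from uniqueness of the extension of a derivation to the fraction field); with that in place the expansion $s_N^k=\sum_{\Omega(m)\le k}c_m*\partial_m$ on $F$ is exactly as routine as you claim.
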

\begin{proof}
  For $1\le i \le n+1$, let $S_i(\vk)$ be the statement:
  \begin{align*}
    \det(\partial^{\vk[:i-1]},\partial_{\vm}) = 0
  \end{align*}
  for any $\vm$ with $\Omega(\vm) = \vk[i:]$. Let $S_i$ be the statement:
  for all admissible $\vk$, $S_i(\vk)$. The assumption on the vanishing
  of all generalized Wronskians means $S_1$ is true. If we can establish
  $S_{i+1}$ from $S_i$, then by induction $S_{n+1}$ is true. Consequently,
  $\det(\partial^0, \partial^1, \ldots, \partial^{n-1}) = 0$; that is the
  $\partial$-Wronskian of $\vec{f}$ vanishes since $\vk = (0,1,\ldots, n-1)$
  is admissible.

  To establish $S_{i+1}$, let $S_{i,j}(\vk)$ be the statement:
  \begin{align*}
    \det(\partial^{\vk[:i-1]}, \partial_{\ell}\partial^j, \partial_{\vm}) = 0
  \end{align*}
  for any $\ell \in \Nn$ with $\Omega(\ell) +j = k_i$ and for any $\vm$ with
  $\Omega(\vm) = \vk[i+1:]$. For $j \ge 0$, let $S_{i,j}$ assert $S_{i,j}(\vk)$
  for all admissible $\vk$ with $k_i \ge j$. Note that $S_{i,0}$ is equivalent
  to $S_i$ and hence is assumed to be true. Now assume $S_{i,j}$ is true for
  some $j \ge 0$. We argue that $S_{i,j+1}$ is true. Once this is established
  then it follows from induction that $S_{i,k_i}(\vk)$ is true for any
  admissible $\vk$. Since $S_{i,k_i}(\vk)$ is equivalent to $S_{i+1}(\vk)$
  and $\vk$ is an arbitrary admissible tuple, we establish $S_{i+1}$.
  
  It remains to prove $S_{i,j+1}$. To do that, pick an arbitrary admissible
  $\va$ with $a_i \ge j+1$, an arbitrary $b \in \Nn$ with $\Omega(b) + j +1 =
  a_i$ and an arbitrary $\vc$ with $\Omega(\vc) = \va[i+1:]$. We need to
  show that
  \begin{equation}
    \det(\partial^{\va[:i-1]}, \partial_{b}\partial^{j+1}, \partial_{\vc}) =0.
    \label{eq:s(i,j+1)}
  \end{equation}
  By Lemma~\ref{l:commutator},
  \begin{equation}
    \begin{split}
      \det&(\partial^{\va[:i-1]}, \partial_{b}\partial^{j+1},
    \partial_{\vc}) =\det(\cdots, \partial\partial_b\partial^j
    + \log(b)\partial_b\partial^j, \cdots)\\ &=\det(\cdots,
    \partial\partial_b\partial^j, \cdots) + \log(b)\det(\cdots,
    \partial_b\partial^j, \cdots).
  \end{split}
  \label{eq:sum}
  \end{equation}
  Note that $a_i -1 \ge j \ge 0$ and $\va':=(a_1,\ldots, a_i -1, \ldots a_n)$
  is still admissible. Because $\Omega(\va[:i-1]) = \Omega(\va'[:i-1])$,
  $\Omega(b) + j = a_i-1$ and $\Omega(\vc) = \va[i+1:] = \va'[i+1:]$ it
  follows from $S_{i,j}(\va')$ that
  \begin{equation}
    \det(\cdots, \partial_b\partial^j, \cdots) = 0
    \label{eq:2nd}
  \end{equation}
  We now claim that
  \begin{equation}
    \det(\cdots, \partial\partial_b\partial^j, \cdots) = 0
    \label{eq:1st}
  \end{equation}
  as well. This is because for each prime $p$, $\Omega(pb)+j = \Omega(b)+j+1 =
  a_i$ and so by $S_{i,j}(\va)$
  \begin{equation}
    \det(\partial^{\va[:i-1]}, \partial_p\partial_b\partial^j, \partial_{\vc})
    = \det(\partial^{\va[:i-1]}, \partial_{pb}\partial^j,
     \partial_{\vc})= 0.
    \label{eq:p}
  \end{equation}
  Now multiply Equation~\eqref{eq:p} by $\log(p)e_p$ then sum
  through the primes, we conclude from Lemma~\ref{l:log-basic} and the
  continuity of determinant that Equation~\eqref{eq:1st} holds. Finally,
  Equation~\eqref{eq:sum},~\eqref{eq:2nd} and~\eqref{eq:1st} together imply
  Equation~\eqref{eq:s(i,j+1)}. Since $\va$ is an arbitrary admissible tuple
  with $a_i \ge j+1$, we establish $S_{i,j+1}$ and complete the proof.
\end{proof}

We are now ready to prove Theorem~\ref{th:main}.
\begin{proof}[Proof of Theorem~\ref{th:main}]
  Suppose $\sum_i c_i f_i = 0$, where $c_1, \ldots, c_n \in K$ not all zero,
  witnessing the linear dependence of $f_1,\ldots, f_n$ over $K$. Since
  $\ker \partial_m \supseteq K$ for each $m > 1$ so
  \begin{align*}
   0 = \partial_m \sum c_if_i = c_1 \partial_m f_1 + \cdots + c_n\partial_m
   f_n.
  \end{align*}
  This equation holds for $m=1$ as well since $\partial_1$ is the identity
  operator. Thus, for any $\vm \in \Nn^n$, $\vec{c} = (c_1,\ldots, c_n)$ is
  a non-trivial solution to the linear system $(\partial_{\vm}\vec{f})\vec{x}
  = \vec{0}$. Hence every generalized Wronskian of $f_1,\ldots, f_n$ vanishes.
  
  To prove the other implication, let $L$ be a field that extends $K$
  and contains a copy of $\Qq(\log(k) \colon k \in \Nn)$. The existence
  of such $L$ is guaranteed by~\cite{bourbaki}*{Ch~5, Prop~4}. For any
  $f_1,\ldots, f_n \in F_K$, if their generalized Wronskians all vanish then,
  by Proposition~\ref{p:all=>log}, so does their $\partial$-Wronskian. It
  then follows from Theorem~\ref{th:wdf} that $f_1,\ldots, f_n$ are
  linearly dependent over the kernel of $\partial$ in $F_L$ which is $L$ by
  Proposition~\ref{p:constants}. But then they must also be linearly dependent
  over $K$. An elementary way of seeing this is as follows: view each $f_i$
  as a row vector $(f_{ij} \colon j \in \Nn)$ where $f_{ij}= f_i(j)$. If
  they were linearly independent over $K$, then by Gaussian elimination,
  there exists a sequence of elementary row operations over $K$ that turns
  $f_1,\ldots, f_n$ into a family of non-zero row vectors with straightly
  increasing orders (viewed as arithmetic functions). Since $L$ extends $K$,
  the same sequence of operations can be carried out over $L$ showing that
  $f_1,\ldots, f_n$ are linearly independent over $L$, a contradiction.
\end{proof}

If we view $f \in K\llrrbrac{X_1,\ldots, X_m}$ as an element of
$K\llrrbrac{X_i \colon i \in \Nn}$, it is clear that $(\partial/\partial
X_j) f = 0$ for all $j > m$. Hence, if the generalized Wronskians of a
family in $K\llrrbrac{X_1,\ldots, X_m}$ all vanish, then the same is true
when it is regarded as a family in $K\llrrbrac{X_i \colon i \in \Nn}$. Thus
Theorem~\ref{th:main} generalizes Theorem~3 of~\cite{bd}. At the same time,
Theorem~\ref{th:main} also generalizes Theorem~2.1 of~\cite{walker} since
the ring of germs of analytic functions at the origin of $K^m$ (where $K =
\Rr$ or $\Cc$) and hence its fraction field embeds into $F_K$. In the same
article, the author also identified the smallest collection of generalized
Wronskians whose vanishing implies linear dependence of the family over the
constants~\cite{walker}*{Theorem~3.1,~3.4}. This collection, in our set up,
are those generalized Wronskians indexed by tuples with the property that
a divisor of an entry is also an entry; for instance, in the case $n=3$,
the non-trivial assumptions are the vanishing of $\det(\id, \partial_p,
\partial_{p^2})$ and $\det(\id, \partial_p, \partial_q)$ for all primes $p$
and $q$. One can establish these results for the field $F_K$ by mimicking the
existing proofs. However, we are unable to do so using arguments similar to
those proposed here. Finally, we would like to mention that this article was
inspired by our study of another type dependence---algebraic dependence---of
arithmetic functions. The interplay between formal power series, arithmetic
functions and formal Dirichlet series is also proved to be fruitful in that
context~\cites{onalgind, imaf, pongax}.

\bibliographystyle{plain} 
\bibliography{wld}
\end{document}